\title{Extending the Gini Index to Higher Dimensions via Whitening Processes}
\author{
    Gennaro Auricchio\thanks{Department of Mathematics, University of Padua, Padua, Italy. Email: gennaro.auricchio@unipd.it} \and
    Paolo Giudici\thanks{Department of Economics and Management, University of Pavia, Pavia, Italy. Email: paolo.giudici@unipv.it} \and
    Giuseppe Toscani\thanks{Department of Mathematics, University of Pavia and Institute of Applied Mathematics and Information Technologies, Pavia, Italy. Email: giuseppe.toscani@unipv.it}
}
\date{}
\def\bx{{\textbf x}}
\def\bX{{\textbf X}}
\def\bY{{\textbf Y}}
\def\by{{\textbf y}}
\def\bm{{\textbf m}}
\def \bccc{{\textbf c}}
\def \erre{\mathbb{R}}
\newtheorem{theorem}{Theorem}
\newtheorem{corollary}{Corollary}
\newtheorem{example}{Example}
\newtheorem{definition}{Definition}
\begin{document}
\maketitle

\begin{abstract}
Measuring the degree of inequality  expressed by a multivariate statistical distribution  is a challenging problem, which appears in many fields of science and engineering.  In this paper, we propose to extend the well known univariate Gini coefficient to  multivariate distributions, by maintaining most of its properties.
Our extension is based on the application of whitening processes that possess the property of scale stability.

\end{abstract}
\maketitle

\vspace{1cm}

{\textbf{ Keywords.}} Multivariate inequality measures, Gini index, whitening process, Mahalanobis distance\\

\section{Introduction}
The Lorenz curve \cite{Lor} and the Gini index \cite{Gini1,Gini2} are still the most important tools to measure the inequality (mutual variability) expressed by a statistical distribution, such as the distribution of income or wealth in a country \cite{Eli,Gho, Gho2, To1}.  However,  they are univariate instruments, so that, for a given random $n$-dimensional vector $\bX$ of scalar components $X_i$, $i = 1,2,\dots, n$, they are suitable to analyze the variables $X_i$ individually, ignoring the dependence structure they have as components of $\bX$. In reason of their importance in economical applications, there had been several efforts to extend the notions of Lorenz curve and Gini index to the multivariate case. The earliest approach, by means of methods of differential geometry,  is due to Taguchi \cite{Tagushi1,Tagushi2}. Further suggestions came by Arnold \cite{Arnold}, Arnold and Sarabia \cite{AS}, Gajdos and Weymark \cite{GW}, Grothe, K\"achele and Schmid \cite{Gro},  Koshevoy and Mosler \cite{KoshMos96,KoshMos97}, and Sarabia and Jorda \cite{Sarabia}. Unfortunately, as outlined in \cite{AS},  all these multivariate extensions are essentially determined by elegant mathematical considerations but often lack applicability and interpretability. 
In addition, these proposals do not possess some of the fundamental properties which are required to inequality measures, properties satisfied by the univariate Gini index. 

In a recent paper \cite{giudici2024measuring}, the possibility of measuring inequality of  multidimensional statistical distributions by resorting to Fourier transform \cite{Au1,Au2,To1}, has been investigated.
There, one of the key properties that a multivariate Gini index should possess has been identified in the \textit{scaling invariance} property on components \cite{hurley2009comparing}, which
is essential when trying to recover the value of the inequality index in a
multivariate phenomenon composed by different quantities, possibly measured in different
unit of measurement.

By resorting to the Mahalanobis distance \cite{Maha}, in place of the Euclidean distance, in \cite{giudici2024measuring}  a new version of the multivariate Gini coefficient satisfying the the scaling invariance property was proposed and studied.
Furthermore, it was shown that, for multivariate Gaussian distributions, the value of the proposed multivariate Gini index is related to the coefficient of variation introduced by Voinov and Nikulin \cite{VN}, as it does for the univariate case.
Owing to the fact that Mahalanobis distance is closely related to the process of \emph{whitening}  of a random vector, we will here extend the methods in \cite{giudici2024measuring}, leading 
to some generalizations of the Mahalanobis distance.
Among them, we will extract one which is particularly well suited to define a new multivariate Gini index, which appears  easy-to-handle and interpret.

Whitening is a linear transformation which, given a random $n$-dimensional vector $\bX =(X_1,\dots,X_n)^T$, of mean value ${\bm} = (m_1,\dots, m_n)^T$ and covariance matrix $\Sigma$, returns a new random vector $\bX^{*}$ whose entries are orthonormal, that is  the variance of each $X^{*}_i$ is $1$ and
the covariance of any $X^{*}_i$ and $X^*_j$ is null, whenever $i\neq j$. 
Considering that orthonormality among random variables greatly simplifies multivariate data analysis, both from a computational and a statistical standpoint, whitening is a critically important tool, most often employed in pre-processing. 
In essence, whitening is a generalization of standardisation,  a 
transformation
that is carried out by:
\begin{equation}
\label{sta}
{\bX}^{*} = V^{-1/2}{\bX}
\end{equation}
where the diagonal matrix $V = diag(var(X_1), var(X_2), \dots,var(X_n) )$ contains the variances of $X_i$, $i =1,2, \dots,n$. 
This results in a new random vector, namely $\bX^*$, whose components have unitary variance, that is $var(X^*_i ) = 1$, for every $i =1,2, \dots,n$.
Notice, however, that this transformation does not remove any correlation that the original entries of the vector $\bX$ possess. 
As we shall see, most whitening procedures lack the \emph{scale stable} property, which ensures that the whitened random vector remains unchanged if the components of the original random vector $\bX$ are scaled by a positive quantity.
This property is essential to obtain a multivariate inequality index that is scale invariant.

The content of this paper is as follows. In Section 2 we describe in details the whitening process.   Furthermore, we discuss the properties which are important in connection with a good definition of a multivariate inequality index. Then, in view of applications, in Section 3, we introduce the new multivariate Gini-type index, by discussing its main properties. Section 4 presents an application of the multivariate index to the study of market economic inequality.

\section{The Whitening Process}\label{sec:whitening}
\label{sec:SI}
In what follows, we denote with $\mathcal{P}(\erre^n)$ the set of $n$ dimensional random vectors.
Since every random vector is identified by its associated probability distribution, with a slight abuse of notation, we use the random vector $\bX$ and its associated probability measure $\mu$ interchangeably.
Moreover, we denote with $\mathcal{P}_{Id}(\erre^n)$ the subset of $\mathcal{P}(\erre^n)$ containing the random vectors whose covariance matrix is the identity matrix.
In its most generic form, a \textit{whitening process} is a map that, given a $n$-dimensional random  vector $\bX$, of positive mean ${\bm}$,  covariance matrix $\Sigma$, and characterized by a probability measure $\mu$, returns a new $n$-dimensional random vector whose covariance matrix is the identity, that is $\mathcal{S}:\mathcal{P}(\erre^n)\to\mathcal{P}_{Id}(\erre^n)$.
We say that a whitening process $\mathcal{S}$ is linear if, for any given $\bX\in\mathcal{P}(\erre^n)$, there exists a $n\times n$ square matrix that depends on $\bX$ through its distribution, namely $W_{\mu}$, such that
\begin{equation}
    \label{whi1}
    {\bX}^{*} = \mathcal{S}(\bX)= W_{\mu}{\bX}.
\end{equation}
The matrix $W_\mu$ is also known as whitening matrix (associated with $\mathcal{S}$) \cite{kessy2018optimal}.
If the covariance matrix of $\bX$, namely $\Sigma$, is invertible, then the whitening matrix in \eqref{whi1} must satisfy the identity $W_\mu\Sigma W_\mu^T = I$ and thus $W_\mu (\Sigma W_\mu^TW_\mu) = W_\mu$, which boils down to
\begin{equation}
    \label{whi2}
    W_\mu^TW_\mu =\Sigma^{-1}.
\end{equation}
We remark that, given a $n$-dimensional random vector $\bX$ whose covariance matrix is $\Sigma$, condition \eqref{whi2} does not determine uniquely the linear application that sends $\bX$ to a whitened vector. 
Indeed, the identity \eqref{whi2} does not fully identify $W_\mu$ but allows for rotational freedom.
For example, given a whitening matrix $W_\mu$, any $\widetilde W_\mu$ of the form
\[
\widetilde W_\mu = Z W_\mu,
\]
is a whitening matrix as long as $Z$ is an orthogonal matrix, \textit{i.e.} $Z^TZ = I$, since 
\[
(\widetilde W_\mu)^T\widetilde W_\mu=W_\mu^TZ^TZW_\mu=W_\mu^TW_\mu=\Sigma^{-1},
\]
hence $\widetilde W_\mu$ satisfies \eqref{whi2} regardless of the choice of $Z$. 
Consequentially, there are multiple ways to whiten a random vector, even if we restrict our attention to linear whitening processes \cite{li1998sphering}.

The \emph{Zero-phase Components Analysis} (ZCA) whitening transformation, also known as Mahalanobis whitening \cite{Maha}, is characterized by the matrix
\begin{equation}  
\label{ZCA}
W_\mu^{Maha} = \Sigma^{-1/2}.
\end{equation}
Note that, since $\Sigma$ is symmetric and positive definite, we can decompose it as  follows
\[
    \Sigma=Z\Theta Z^T,
\]
where $Z$ is the eigenmatrix associated with $\Sigma$, and $\Theta$ is the diagonal matrix whose diagonal contains the positive eigenvalues of $\Sigma$.
Since $\Theta$ is diagonal and its diagonal contains only positive values, we have that $\Sigma=(\Theta^{{1}/{2}}Z)^T(\Theta^{{1}/{2}}Z)$ and, therefore
\begin{equation}
\label{Ma}
    \Sigma^{-1/2}=Z^T\Theta^{-1/2} Z.
\end{equation}

\begin{example}
 Let ${\bX}$ be a $n$-dimensional Gaussian random vector, of mean ${\bm}$ and positive definite  $n\times n$ covariance matrix $\Sigma$, with associated probability density function
 \begin{equation*}
f_\bX({\bx})  = \frac 1{(2\pi)^{n/2}( \det\Sigma^{-1})^{1/2}} \exp\left\{-  \frac 12 (\bx -\bm)^T\Sigma^{-1} (\bx-\bm) \right\}, 
\end{equation*}
Since the whitening process returns a new  $n$-dimensional Gaussian random vector ${\bX}^*$  of the same dimension $n$ and with unit diagonal \emph{white} covariance, the components of ${\bX}^*$ are uncorrelated. In the Gaussian case, this is equivalent to independence, since we can express the joint probability density function as a product of the marginals. Hence
$\bX^* $ is a $n$-dimensional Gaussian random  vector, of mean $\bm^* = {W_{\mu} \bm}$ and unit diagonal  covariance, with independent components. Indeed
\begin{align*}
f_{\bX^*} ({\bx}) &= \frac 1{(2\pi)^{n/2}} \exp\left\{-  \frac 12 (\bx -\bm^*)^T(\bx-\bm^*) \right\} = \prod_{i=1}^n \frac 1{(2\pi)^{1/2}} \exp\left\{-  \frac 12 (x_i -m_{i}^*)^2 \right\}.
\end{align*}
\end{example}

When it comes to measure the inequality of a probability distribution, not all whitening processes are, however, equal.
For example, only a few of the known whitening processes are \emph{Scale Stable}, i.e. such that the random vector $\bX^{*}$ obtained via the whitening process does not change if we multiply one or more entries of the pre-whitening vector $\bX$ by a positive constant.

\begin{definition}[Scale Stability]
    A whitening process $\mathcal{S}$ is said to be \emph{Scale Stable} if
    \[
        \mathcal{S}({\bX})=\mathcal{S}(Q{\bX}),
    \]
    for every random vector $\bX$ and for any diagonal matrix $Q=diag(q_1,q_2,\dots,q_n)$ whose diagonal elements are positive constants.
\end{definition}

Scale Stability is an essential property for any sparsity index whose definition relies on whitened data, as it is connected to the scale invariance of the index itself.
For this reason, we now show that  linear \emph{Scale Stable} whitening processes always exist and identify two of them.

\subsubsection*{The Choleski Whitening}
\textit{Cholesky whitening} is a process based on the Cholesky factorization of a definite positive matrix. 
In this case the whitening matrix is
\begin{equation}\label{Chol}
W_\mu^{Chol} =L^T, 
\end{equation}
where $L$ is the unique lower triangular matrix with positive diagonal values that satisfies \eqref{whi2}. 
Owing to the triangular structure of $W_\mu^{Chol}$, the whitening process it induces is Scale Stable, as the following result shows.

\begin{theorem}
\label{thm:CholSS}
    The Choleski whitening process is {Scale Stable}.
\end{theorem}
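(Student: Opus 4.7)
The plan is to exploit the uniqueness of the Cholesky factorization with positive diagonal and the fact that the triangular structure (with positive diagonal) is preserved under right-multiplication by a positive diagonal matrix.

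First, I would set up notation. Let $\bY = Q\bX$ where $Q = \mathrm{diag}(q_1,\dots,q_n)$ has positive diagonal. Since $Q$ is symmetric, the covariance matrix of $\bY$ is $\Sigma_Y = Q\Sigma Q$, so $\Sigma_Y^{-1} = Q^{-1}\Sigma^{-1}Q^{-1}$. Let $W_\mu^{Chol}$ denote the Cholesky whitening matrix for $\bX$, characterized by \eqref{whi2} together with the fact that, from \eqref{Chol}, $W_\mu^{Chol} = L^T$ is upper triangular with positive diagonal.

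Next I would produce a candidate whitening matrix for $\bY$ and invoke uniqueness. Define $\widetilde W := W_\mu^{Chol}Q^{-1}$. A direct computation using \eqref{whi2} for $\bX$ gives
\[
\widetilde W^T \widetilde W = Q^{-1}(W_\mu^{Chol})^T W_\mu^{Chol} Q^{-1} = Q^{-1}\Sigma^{-1}Q^{-1} = \Sigma_Y^{-1},
\]
so $\widetilde W$ is a whitening matrix for $\bY$. Moreover, since $W_\mu^{Chol}$ is upper triangular with positive diagonal and $Q^{-1}$ is diagonal with positive entries, the product $\widetilde W$ is still upper triangular with positive diagonal. By the uniqueness clause in the Cholesky factorization (which underlies the definition of $W^{Chol}_\mu$ in \eqref{Chol}), we conclude $W_{Q\mu}^{Chol} = \widetilde W = W_\mu^{Chol}Q^{-1}$, where $Q\mu$ denotes the distribution of $\bY$.

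Finally I would chain the identities to obtain scale stability:
\[
\mathcal{S}(Q\bX) = W_{Q\mu}^{Chol}(Q\bX) = W_\mu^{Chol}Q^{-1}Q\bX = W_\mu^{Chol}\bX = \mathcal{S}(\bX).
\]
There is no real obstacle; the only delicate point is making sure that the product of an upper triangular matrix with positive diagonal and a positive diagonal matrix is again upper triangular with positive diagonal, which is immediate, and then correctly quoting the uniqueness of the Cholesky factorization to identify $\widetilde W$ with $W_{Q\mu}^{Chol}$.
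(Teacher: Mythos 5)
Your proof is correct and follows essentially the same route as the paper's: compute the covariance of $Q\bX$, observe that $W_\mu^{Chol}Q^{-1}$ is triangular with positive diagonal and satisfies \eqref{whi2} for $Q\Sigma Q$, invoke uniqueness of the Cholesky factorization, and cancel $Q^{-1}Q$. If anything, you are slightly more explicit than the paper about verifying the positive-diagonal condition and about naming the whitening matrix of $Q\bX$ before chaining the identities.
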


\begin{proof}
    Let $Q=diag(q_1,q_2,\dots,q_n)$ be a diagonal matrix such that $q_i>0$.
    Given a random vector $\bX$, let us denote with $\Sigma$ its covariance matrix; then, we have that the covariance matrix of $\bY=Q\bX$ is $Q\Sigma Q$.
    Let $L$ be the Choleski factorization of $\Sigma^{-1}$, i.e. $\Sigma^{-1}=L^TL$.
    It is easy to see that the inverse matrix of $Q\Sigma Q$ is $Q^{-1}\Sigma^{-1} Q^{-1}$. Hence we have
    \begin{equation}
        (Q\Sigma Q)^{-1}=(LQ^{-1})^T(LQ^{-1}).
    \end{equation}
    Since $L$ is lower triangular,  $LQ^{-1}$ is lower triangular as well, since the $i$-th row of $LQ^{-1}$ is equal to the $i$-th row of $L$ multiplied by $q_i^{-1}$. 
    Owing to the uniqueness of the Choleski factorization, we conclude that $LQ^{-1}$ is the Choleski factorization associated to $(Q\Sigma Q)^{-1}$.
    Finally, notice that
    \[
        LQ^{-1}(Q{\bX})=L{\bX},
    \]
    which concludes the proof.
\end{proof}

\subsubsection*{The correlation Whitening}
The correlation whitening, also known as \textit{Zero-Components Analysis} (ZCA-cor) employs a whitening matrix which derives from the correlation matrix.
In this case, given a random vector $\bX$, the whitening matrix is defined as:
\begin{equation}
\label{ZCA-cor}
W_\mu^{ZCA-cor} = P^{-1/2}V^{-1/2},
\end{equation}
where $P$ is the correlation matrix of $\bX$, and $V$ is the diagonal matrix introduced in \eqref{sta}.
Again, notice that $P^{-\frac{1}{2}}$ in \eqref{ZCA-cor} is not defined uniquely, thus there are multiple ZCA-cor matrices associated with the same $\bX\in\mathcal{P}(\erre^n)$.
Since the correlation matrix $P$ is scale invariant, it is easy to prove that the ZCA-cor whitening process is Scale Stable.

\begin{theorem}
\label{thm:ZCAcorrSS}
    The ZCA-cor whitening process is {Scale Stable}, regardless of how the square root of $P^{-1}$ is selected.
\end{theorem}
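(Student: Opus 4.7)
The plan is to track carefully how each factor in $W_\mu^{ZCA-cor} = P^{-1/2}V^{-1/2}$ transforms under the substitution $\bY = Q\bX$ and then verify that the scalings cancel out. The key observation will be that the correlation matrix $P$ is invariant under positive diagonal rescaling, while the variance matrix $V$ absorbs the scaling in a way that is offset exactly by the factor $Q$ appearing in $\bY$.

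First I would set $\Sigma_Y = Q\Sigma Q$ for the covariance of $\bY$, and compute $V_Y = \mathrm{diag}(\mathrm{var}(Y_1),\dots,\mathrm{var}(Y_n)) = Q V Q = Q^2 V$, so $V_Y^{-1/2} = Q^{-1}V^{-1/2}$. Then I would observe that the correlation matrix $P_Y$ of $\bY$ equals $P$: indeed $P_Y = V_Y^{-1/2}\Sigma_Y V_Y^{-1/2} = Q^{-1}V^{-1/2}(Q\Sigma Q)V^{-1/2}Q^{-1} = V^{-1/2}\Sigma V^{-1/2} = P$, where I used the fact that diagonal matrices commute. Because $P_Y = P$, any matrix $P^{-1/2}$ chosen as a square root of $P^{-1}$ is simultaneously a valid square root of $P_Y^{-1}$; this is precisely why the result holds regardless of the choice of square root, provided we make the same choice on both sides.

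Putting the pieces together, I would write
\begin{equation*}
W_{Q\mu}^{ZCA-cor}(Q\bX) = P_Y^{-1/2}V_Y^{-1/2}(Q\bX) = P^{-1/2}Q^{-1}V^{-1/2}Q\bX.
\end{equation*}
Since $V^{-1/2}$ and $Q$ are both diagonal, they commute, so $Q^{-1}V^{-1/2}Q = V^{-1/2}$, and the right-hand side collapses to $P^{-1/2}V^{-1/2}\bX = W_\mu^{ZCA-cor}\bX$. This yields $\mathcal{S}(Q\bX) = \mathcal{S}(\bX)$, which is the claim.

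The only subtle point, and what I would flag as the one step deserving a sentence of justification rather than a line of algebra, is the uniqueness-free handling of $P^{-1/2}$: the argument does not require any canonical choice of square root, only that the same choice be used for $P$ and $P_Y$, which is possible because these matrices coincide. Everything else is the commutativity of diagonal matrices, which makes the scaling factor $Q$ cancel against $Q^{-1}$ appearing in $V_Y^{-1/2}$.
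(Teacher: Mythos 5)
Your proof is correct and follows essentially the same route as the paper: compute $V_Y = QVQ$, observe that the correlation matrix is invariant under positive diagonal rescaling, and let the resulting $Q^{-1}$ cancel against $Q$ by commutativity of diagonal matrices. If anything, your explicit remark that the same square root of $P^{-1}$ can (and must) be chosen for both $\bX$ and $Q\bX$ precisely because $P_Y = P$ handles the ``regardless of the square root'' clause more cleanly than the paper's proof, which works through one particular eigendecomposition-based square root and asserts without detail that the argument generalizes.
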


\begin{proof}
Without loss of generality, we show this result for a specific square root of $P^{-1}$ as our proof can be generalized to any square root of $P^{-1}$.
    Let $Q=diag(q_1,q_2,\dots,q_n)$ be a diagonal matrix such that $q_i>0$.
    Given a random vector $\bX$, let us denote with $\Sigma$ its covariance matrix; then, we have that the covariance matrix of $\bY=Q\bX$ is $Q\Sigma Q$.
    Let $P$ be the correlation matrix associated with $\bX$.
    It is easy to see that $P$ is definite positive and symmetric. 
    We then decompose $P$ as $P= O^T\Lambda O$, where $\Lambda$ is a diagonal ma\-trix containing the eigenvalues of $P$ and $O$ is the matrix containing the eigenvectors associated with $P$.
    Notice that $\Sigma=V^{\frac{1}{2}}P V^{\frac{1}{2}}$, where $V=diag(var(X_1), var(X_2), \dots, var(X_n))$.
    Moreover, the correlation matrix $P$ is scale invariant, thus the correlation matrix induced by $Q\bX$ is still $P$.
    Let us now define $\Lambda^{-\frac{1}{2}}O^TV^{-\frac{1}{2}}$ and consider ${\bX}_{*}=\Lambda^{-\frac{1}{2}}O^TV^{-\frac{1}{2}}\bX$. It is easy to see that the covariance matrix induced by ${\bX}_{*}$ is the identity matrix.
    Let us now consider ${\bY}=Q\bX$.
    The variance of each $Y_i$ is equal to $q_i^2$ times the variance of $X_i$, that is $var(Y_i)=q_i^2 var(X_i)$.
    Since the correlation matrix of $Y$ is the same as the correlation matrix of $X$, and since we have that the ZCA-cor whitening matrix induced by $\bY$ is $\Lambda^{-\frac{1}{2}}O^TV_{Q}^{-\frac{1}{2}}$, where $V_Q=diag(q_1^2var(X_1),q_2^2var(X_2),\dots,q_n^2var(X_n))=QVQ$, we infer that
    \[
        \Lambda^{-\frac{1}{2}}O^TV_{Q}^{-\frac{1}{2}}\bY=\Lambda^{-\frac{1}{2}}O^TV^{-\frac{1}{2}}Q^{-1}\bY =\Lambda^{-\frac{1}{2}}O^TV^{-\frac{1}{2}}\bX,
    \]
    which concludes the proof for the ZCA-cor  whitening.
\end{proof}

\subsubsection*{Counter example}
We remark that we not all the whitening processes are Scale Stable. 
Consider for example the Principal Components Analysis (PCA) whitening, a well known statistical pre-processing method, whose whitening matrix is defined as
\begin{equation}
    \label{PCA}
    W^{PCA} =\Theta^{-1/2}Z^T, 
\end{equation}
where $\Theta$ is the diagonal matrix containing the eigenvalues of the covariance matrix $\Sigma$, and $Z$ the corresponding (orthogonal) eigenvector matrix (e.g. \cite{friedman1987exploratory}).
The PCA transformation first rotates the variables using the eigenvector matrix of $\Sigma$.
This results in orthogonal components, but with different variances. 
To obtain whitened components, the rotated variables are then scaled by the square root of the eigenvalues via the matrix $\Theta^{-1/2}$. 
Note that, due to the sign ambiguity of the eigenvectors $Z$, the PCA whitening matrix given by \eqref{PCA} is not unique. 
However, adjusting the column signs in $Z$ such that elements on the diagonal of $\Theta$ are positive: all diagonal elements positive, results in a unique PCA whitening transformation with positive diagonal cross-covariance and cross-correlation. 
Notice that this procedure is different from the one defining the ZCA whitening process, since the ZCA first scales the entries, then rotates the variables, and then scales the variables according to the eigenvalues of the correlation matrix.
Despite its similarities with the ZCA, the PCA whitening is not Scale Stable, as the next example shows.\\
Let us consider a Gaussian random vector $\bX$, and let us set $\nu$ its probability measure. Its mean is $m_\bX=(1,1)$ and covariance matrix is
\[
\Sigma_\nu=\begin{pmatrix}
4 & -2 \\
-2 & 3
\end{pmatrix}.
\]
Since the eigenvalues of $\Sigma_\nu$ are $\theta_1=5.56$ and $\theta_2=1.44$ and their associated eigenvectors are $v_1=(0.78,0.61)$ and $v_2=(-0.61,0.78)$, respectively, from \eqref{PCA} we infer that
\[
    W^{PCA}_\nu=\begin{pmatrix}
\frac{1}{\sqrt{5.56}} & 0 \\
0 & \frac{1}{\sqrt{1.44}}
\end{pmatrix}\begin{pmatrix}
0.78 & -0.61 \\
0.61 & 0.78
\end{pmatrix}=\begin{pmatrix}
0.33 & -0.26 \\
0.51 & 0.66
\end{pmatrix}.
\]
Therefore, the random vector $W_\nu^{PCA}\bX$ is a Gaussian vector whose covariance is the identity matrix and its mean is $W_\nu^{PCA}m_\bX=(0.07,1.17)$.
Let us now consider the Gaussian random vector $\bY=(2X_1,X_2)$, that is the random vector obtained by multiplying the first entry of $\bX$ by two. Let us denote by $\tilde\nu$ its probability measure.
It is easy to see that $\bY$ is still a Gaussian random vector, whose mean is $m_\bY=(2,1)$ and whose covariance matrix is
\[
    \Sigma_{\tilde\nu}=\begin{pmatrix}
16 & -4 \\
-4 & 3
\end{pmatrix}.
\]
In this case, the eigenvalues of $\Sigma_{\tilde\nu}$ are $\eta_1=17.13$ and $\eta_2=1.87$ and their associated eigenvector are $w_1=(0.96,0.27)$ and $w_2=(-0.27,0.96)$, respectively.
In particular, the PCA whitening matrix associated with $Y$ is
\[
    W^{PCA}_{\tilde\nu}=\begin{pmatrix}
0.23 & -0.06 \\
0.20 & 0.71
\end{pmatrix}.
\]
Therefore, the random vector $W^{PCA}_{\tilde\nu} \bY$ follows a Gaussian distribution whose covariance matrix is the identity and whose mean is $W^{PCA}_{\tilde\nu} m_\bY=(0.40,1.11)$.
We then conclude that $W^{PCA}_\nu \bX\neq W^{PCA}_{\tilde\nu} \bY$ so that the PCA whitening is not Scale Stable.\\

\subsubsection*{The $p$-Mahanolobis Metrics}
Given a $n$-dimensional random vector $\bX$, we denote with ${\bm} = (m_1,m_2,\dots, m_n)^T$ its mean and with $\Sigma_\mu$ its positive definite $n\times n$ covariance matrix.
The Mahalanobis metric is then defined as
\begin{equation}\label{m_2}
    \bm_2({\bX})=\sqrt{{\bm}^T\Sigma^{-1}_\mu{\bm}}=\sqrt{(W_\mu{\bm})^T(W_\mu{\bm})}=||W_\mu{\bm}||_2,
\end{equation}
where $W_\mu$ is any whitening matrix.
Since for any whitening matrix $W_\mu$ we have that 
\[
    \bm^TW_\mu^TW_\mu\bm=\bm^T\Sigma^{-1}_\mu\bm
\]
the Euclidean norm of $W_\mu\bm$ does not depend on $W_\mu$.
This property, however, is lost if we consider other norms of the vector $W_\mu\bm$.
In this case, the choice of the whitening matrix $W_\mu$ affects the value of the norm, and it may not be Scale Stable.
To overcome this issue it suffices to consider a Scale Stable whitening.

\begin{definition}[The $l_p$ Mahalanobis norm]
\label{def:mahanolobislp}
    Let $\mathcal{S}$ be a \emph{Scale Stable} whitening procedure and let $\bX$ be a random vector.
    For any $p\ge 1$, we define the $l_p$-Mahalanobis norm induced by $\mathcal{S}$ of $\bX$ as follows
    \begin{equation}\label{M_p}
        M_p({\bX})=||\mathbb{E}[\mathcal{S}({\bX})]||_{p},
    \end{equation}
    where $\mathbb{E}[\mathcal{S}(\bX)]$ is the vector containing the mean of $\mathcal{S}({\bX})$.
\end{definition}
From a constructive viewpoint, note that, owing to Theorem \ref{thm:CholSS} and \ref{thm:ZCAcorrSS}, both Choleski and correlation whitening processes allow us to define a generalized Mahanolobis norm that is scale invariant, i.e.
\[
    M_p({\bX})=M_p(Q{\bX}),
\]
for every diagonal matrix $Q$ whose diagonal contains strictly positive values.

\section{A new multivariate Gini-type index }
\label{sec:Gini}

In this section, we show how to define a scaling invariant Gini index for multivariate distributions using the $l_p$-Mahalanobis norm introduced in Definition \ref{def:mahanolobislp}.
From now on we consider only the correlation whitening, as it allows to define a whitening process that, given a random non-negative vector $\bX$, returns a whitened vector that is also non-negative.
As we will see, the key to obtain such property is to select a suitable $P^{-\frac{1}{2}}$ in equation \eqref{ZCA-cor}.

\begin{definition}
For any $\bX$ random vector, let $W_\mu^{ZCA-cor}$ be the correlation whitening process associated with $\bX$.
Then, we define
\begin{equation}
\label{def:Ginipdef}
    G_p(\bX)=\frac{1}{2M_p(\bX)}\int_{\erre^n\times \erre^n}||W^{ZCA-cor}_{\mu}(\bx-\by)||_p\mu(d\bx)\mu(d\by),
\end{equation}
where, for every $p\ge 1$,$M_p(\bX)$ is the Mahalanobis metric (see Definition \ref{def:mahanolobislp}).
\end{definition}
Depending on the whitening matrix, i.e. on the function $\bX\to W_\mu^{ZCA-cor}$ that maps a random vector into its whitening matrix, the index $G_p$ satisfies the defining properties of an inequality index.
For example, if the components of $\bX$ are non-negative, the whitened vector $W_\mu^{ZCA-cor}\bX$ might not be non-negative, depending on how we select $P^{-\frac{1}{2}}$ in \eqref{ZCA-cor}.
We now characterize a way to associate any $\bX$ to a square root of $P^{-1}$ such that $W_\mu^{ZCA-cor}\bX$ is non-negative if and only if $\bX$ is non-negative.

\begin{theorem}
\label{thm:positivitypreserving}
    Given $\bX$ a random vector whose correlation matrix $P$ is invertible, let $P^{-1}$ denote the inverse of $P$.
    Moreover, let us decompose $P^{-1}$ as $P^{-1}=O\Lambda O^T$ where $O$ is the matrix containing the eigenvectors and $\Lambda$ is the diagonal matrix containing the eigenvalues of $P^{-1}$.
   Finally, let $V$ be the diagonal matrix containing the variances of $\bX$. 
   Then, the random vector
    \begin{equation}
        \label{eq:whitenincorrect}
        O\Lambda^{-\frac{1}{2}}O^TV^{-\frac{1}{2}}\bX
    \end{equation}
    is non-negative if and only if $\bX$ is non-negative.
\end{theorem}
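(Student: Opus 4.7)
The plan is to factor the linear transformation as $(O\Lambda^{-1/2}O^T)\circ V^{-1/2}$ and handle the two factors separately.

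First, I would dispose of the diagonal factor $V^{-1/2}$. Since $V^{-1/2}$ is diagonal with strictly positive entries $1/\sqrt{\mathrm{var}(X_i)}$, the vector $V^{-1/2}\bX$ is non-negative (a.s., componentwise) if and only if $\bX$ is. Thus, the statement reduces to proving the same equivalence for the symmetric matrix $A := O\Lambda^{-1/2}O^T$ acting on the (already rescaled) random vector $V^{-1/2}\bX$.

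Next, I would identify $A$ concretely. From the spectral decomposition $P^{-1} = O\Lambda O^T$ one obtains $P = O\Lambda^{-1}O^T$, and therefore
\[
A^2 = O\Lambda^{-1/2}O^T\,O\Lambda^{-1/2}O^T = O\Lambda^{-1}O^T = P ,
\]
so $A$ is the principal (symmetric positive definite) square root of the correlation matrix $P$. This is a clean and canonical object to work with, and makes the claim equivalent to: for the given $\bX$, the random vector $P^{1/2}V^{-1/2}\bX$ is non-negative iff $\bX$ is.

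The crux of the proof is to show that $A = P^{1/2}$ is entrywise non-negative; once this is in hand, the forward direction $\bX\ge 0 \Rightarrow AV^{-1/2}\bX \ge 0$ is immediate. I would argue this by an eigenvector sign analysis on the decomposition $A = \sum_i \lambda_i^{-1/2} o_i o_i^T$, or by a functional-calculus representation such as $P^{1/2} = \tfrac{1}{\pi}\int_0^\infty (P+tI)^{-1} P\, t^{-1/2}\, dt$, together with the structural hypotheses implicit in the setting (in particular that $P$ is a correlation matrix arising from a non-negative $\bX$, so the leading Perron-type eigenvector of $P$ can be chosen non-negative and dominates the off-diagonal cancellations). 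For the converse direction I would exploit the invertibility of $AV^{-1/2}$: since $\bX = V^{1/2}A^{-1}(AV^{-1/2}\bX)$, either a parallel non-negativity argument for $A^{-1} = P^{-1/2}$ on the image cone, or an invariance-of-domain / rank argument showing that the non-negative orthant is mapped bijectively onto itself, will close the equivalence.

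The main obstacle is unambiguously the entrywise non-negativity of $P^{1/2}$. This is a delicate matrix-analytic fact: it does not hold for arbitrary symmetric positive definite matrices, nor even for arbitrary correlation matrices (it already fails in the $2\times 2$ case when the correlation is negative), so the argument must genuinely use the structural features of $P$ in the present context — essentially that positive correlations and the principal choice of square root conspire to keep all entries non-negative. The converse direction then rides on the same structural features applied to $P^{-1/2}$.
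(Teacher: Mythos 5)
Your reduction to the diagonal factor $V^{-1/2}$ and your identification $A=O\Lambda^{-\frac12}O^T=P^{1/2}$ (under the theorem's stated convention that $\Lambda$ carries the eigenvalues of $P^{-1}$) are both correct, and you have put your finger on exactly the right crux: everything hinges on whether the symmetric square root maps the non-negative orthant into itself, i.e.\ on whether it is entrywise non-negative. But that is precisely where your argument stops being a proof: you never establish this fact, and it cannot be established, because it is false in the generality of the statement. The structural hypotheses you hope to invoke are not available. A non-negative random vector can have negatively correlated components (e.g.\ $X_1=B$, $X_2=1-B+\varepsilon$ with $B$ Bernoulli and $\varepsilon\ge 0$ independent noise), and then $P^{1/2}$ already has a negative off-diagonal entry in dimension $2$, so $P^{1/2}V^{-1/2}\bX$ is negative in its first component on the event $\{B=0\}$. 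If one instead reads the matrix as $P^{-1/2}$ (the convention used inside the paper's own proof, where $\lambda_i$ are the eigenvalues of $P$), the situation is worse: the off-diagonal entries of
\[
P^{-1/2}=\frac12\begin{pmatrix}\frac{1}{\sqrt{1+\rho}}+\frac{1}{\sqrt{1-\rho}} & \frac{1}{\sqrt{1+\rho}}-\frac{1}{\sqrt{1-\rho}}\\[2pt] \frac{1}{\sqrt{1+\rho}}-\frac{1}{\sqrt{1-\rho}} & \frac{1}{\sqrt{1+\rho}}+\frac{1}{\sqrt{1-\rho}}\end{pmatrix}
\]
are negative exactly when $\rho>0$, the generic case for non-negative variables; taking $X_1=U$, $X_2=U+W$ with $U,W\ge0$ independent and $U=0,\,W>0$ occurring with positive probability gives a concrete counterexample. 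Your proposed route for the converse is equally blocked: a symmetric positive definite matrix maps the closed orthant bijectively onto itself only if it is a positive diagonal matrix, i.e.\ only if $P=I$.

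For comparison, the paper's own proof contains the same gap, merely better hidden: it factors the map as $O\circ\Lambda^{-\frac12}\circ O^T\circ V^{-\frac12}$ and asserts in its step (3) that the positive diagonal scaling $\Lambda^{-\frac12}$ maps the rotated cone $O^T(\erre^n_+)$ to itself. A positive diagonal matrix preserves the coordinate orthant, not an arbitrary rotated copy of it; that step is valid only when $\Lambda$ is a multiple of the identity or $O$ is a signed permutation. So your instinct that the ``delicate matrix-analytic fact'' is the whole difficulty is exactly right; the difficulty is genuine, and neither your sketch nor the paper's argument resolves it.
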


\begin{proof}
    Let us consider $P^{-\frac{1}{2}}=O\Lambda^{-\frac{1}{2}}O^T$, where $O$ is the orthonormal matrix containing all the eigenvectors of $P^{-1}$ and let $\Lambda^{-1}=diag(\frac{1}{\lambda_1},\frac{1}{\lambda_2},\dots,\frac{1}{\lambda_n})$, where $\lambda_i$ are the eigenvalues of $P$.
If $\bX$ is a non-negative random vector then $O\Lambda^{-\frac{1}{2}}O^TV^{-\frac{1}{2}}\bX$ is also non-negative, since
\begin{enumerate}
    \item if $\bX$ is non-negative, then $V^{-\frac{1}{2}}\bX$ is non-negative;
    \item the rotation $O^T$ maps the set $\erre_+^n:=\{\bx \;\text{s.t.}\; x_i\ge 0\}$ to $O^T(\erre^n_+)$;
    \item the linear application induced by $\Lambda^{-\frac{1}{2}}$ maps $O^T(\erre^n_+)$ to itself;
    \item the rotation $O$ maps $O^T(\erre^n_+)$ back to $\erre_+^n$, which concludes the proof.
\end{enumerate}
\end{proof}
Leveraging on Theorem \ref{thm:positivitypreserving}, given a random vector $\bX$, from now on we will consider the correlation whitening process induced by $P^{-\frac{1}{2}}=O\Lambda^{-\frac{1}{2}}O^T$, and set
\begin{equation}
    \label{eq:Wcorr}
    W^{ZCA}_\mu=O\Lambda^{-\frac{1}{2}}O^TV^{-\frac{1}{2}}.
\end{equation}

    Note that, if $\bX$ is a random vector whose covariance matrix is the identity, then $W^{ZCA}_\mu=Id$.    
We will then consider the family of multidimensional Gini indices induced by $W^{ZCA}_\mu$, so that
\[
    G_p(\bX)=\frac{1}{2M_p(\bX)}\int_{\erre^n\times \erre^n}||W^{ZCA}_\mu(\bx-\by)||_p\mu(d\bx)\mu(d\by),
\]
When $p=1$, the latter identity becomes particularly interesting as, in this case,  we can express $G_1$ as a convex combination of the one dimensional Gini indexes, which we denote with $G$, of the components of the vector $\bX^*=W^{ZCA}_\mu\bX$.

\begin{definition}[$l_1$ Gini Index, ZCA]
    Let $\bX$ be a random vector whose mean vector is ${\bm} = (m_1,m_2,\dots, m_n)^T$ and whose covariance matrix $\Sigma$ is positive definite.
    Moreover, we denote with $\mu$ the probability measure associated with $\bX$. 
    We define
    \[
        G_1(\bX)=\frac{1}{2\sum_{i=1}^n|(W_\mu^{ZCA}\bm)_i|}\int_{\erre^n\times \erre^n}\sum_{i=1}^n|(W_\mu^{ZCA}(\bx-\by))_i|\mu(d\bx)\mu(d\by).
    \]
\end{definition}

\begin{theorem}
\label{thm:decomposition}
    Let $\bX$ be a random vector of mean ${\bm} = (m_1,m_2,\dots, m_n)^T$ and positive definite  $n\times n$ covariance matrix $\Sigma$.
    Then, we have
    \begin{equation}
        \label{eq:ginidec}
        G_1(\bX)=\sum_{i=1}^n\frac{|m_i^*|}{\sum_{i=1}^n |m_i^*|}G\big((W_\mu^{ZCA}\bX)_i\big)
    \end{equation}
    where $m_i^*=(W_\mu^{ZCA}\bm)_i$ and $G(X_i^*)$ is the one dimensional Gini index of the $i$-th component of $\bX^*$.
    Furthermore, if the components of $\bX$ are non negative, we have that 
    \[
    0\le G_1(\bX)\le 1.
    \]
\end{theorem}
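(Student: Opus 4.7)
The plan is to recognize that both the numerator and denominator of $G_1(\bX)$ are separable over coordinates after we apply the linear whitening matrix, so that the multidimensional quantity collapses into a convex combination of one-dimensional Gini indices. I would start by using the linearity of $W_\mu^{ZCA}$ to write $(W_\mu^{ZCA}(\bx-\by))_i = X_i^* - Y_i^*$, where $\bX^* = W_\mu^{ZCA}\bX$ and $\bY^*$ is the analogous whitening of an independent copy with the same distribution. Then by Fubini I can interchange the sum and the integral:
\begin{equation*}
G_1(\bX) = \frac{1}{2\sum_{i=1}^n |m_i^*|}\sum_{i=1}^n \int_{\erre^n\times\erre^n} |X_i^* - Y_i^*|\,\mu(d\bx)\mu(d\by).
\end{equation*}

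Next I would recall the classical one-dimensional identity
\begin{equation*}
G(Z) = \frac{1}{2|\mathbb{E}[Z]|}\int_{\erre\times\erre}|z_1-z_2|\,\mu_Z(dz_1)\mu_Z(dz_2),
\end{equation*}
applied to each coordinate $Z = X_i^*$, whose mean is exactly $m_i^*$. This turns the $i$-th integral in the sum into $2|m_i^*|\,G(X_i^*)$, and after cancelling the factor of $2$ I obtain
\begin{equation*}
G_1(\bX) = \sum_{i=1}^n \frac{|m_i^*|}{\sum_{j=1}^n |m_j^*|}\,G(X_i^*),
\end{equation*}
which is exactly \eqref{eq:ginidec}. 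The only subtlety is that this formal manipulation requires $m_i^*\neq 0$ for the univariate Gini index on the $i$-th coordinate to be defined; components with $m_i^*=0$ contribute nothing to the numerator and can be dropped from both sums.

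For the bounds, the lower bound $G_1(\bX)\ge 0$ is immediate from \eqref{eq:ginidec} since every term is non-negative. For the upper bound I would invoke Theorem \ref{thm:positivitypreserving}: with the choice $W_\mu^{ZCA} = O\Lambda^{-1/2}O^T V^{-1/2}$ fixed in \eqref{eq:Wcorr}, if $\bX$ is non-negative then so is $\bX^*$. In particular each coordinate $X_i^*$ is a non-negative random variable, so its classical univariate Gini index satisfies $0\le G(X_i^*)\le 1$, and moreover $m_i^*\ge 0$, so the weights $|m_i^*|/\sum_j|m_j^*|$ form a genuine probability vector. Thus $G_1(\bX)$ is a convex combination of numbers in $[0,1]$ and therefore lies in $[0,1]$.

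The main obstacle is not the algebraic rearrangement, which is essentially bookkeeping, but ensuring that the upper bound argument rests on a legitimate foundation: one must verify that the particular square root $P^{-1/2}=O\Lambda^{-1/2}O^T$ chosen in \eqref{eq:Wcorr} is the one that preserves non-negativity, which is precisely what Theorem \ref{thm:positivitypreserving} was proved for. Without that choice, the whitened components $X_i^*$ could be negative and the upper bound $G_1(\bX)\le 1$ would not automatically follow from the one-dimensional theory.
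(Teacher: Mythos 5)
Your proof is correct and follows essentially the same route as the paper's: expand the $\ell_1$ norm, interchange sum and integral, identify each coordinate integral with $2|m_i^*|\,G(X_i^*)$ via the distribution of the pushed-forward component $X_i^*=(W_\mu^{ZCA}\bX)_i$, and then use the positivity-preserving choice of $P^{-1/2}$ from Theorem \ref{thm:positivitypreserving} to get $G(X_i^*)\in[0,1]$ and hence the convex-combination bound. One small caveat: your aside that coordinates with $m_i^*=0$ ``contribute nothing to the numerator'' is not accurate --- each whitened component has unit variance, so $\int|x_i^*-y_i^*|\,\mu(d\bx)\mu(d\by)>0$ and such a term does enter the numerator of $G_1(\bX)$ while $|m_i^*|G(X_i^*)$ is an indeterminate $0\cdot\infty$; the decomposition really requires $m_i^*\neq0$ for all $i$, an assumption the paper also leaves implicit.
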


Equation \eqref{eq:ginidec} is the most important result of this paper, and establishes that the higher dimensional Gini index induced by the $l_1$ Mahalanobis norm is a convex combination of the $1$-dimensional Gini indexes of the random variable $\bX^*=W^{ZCA}_\mu\bX$.

\begin{proof}
    First, notice that the mean vector of $\bX^*$ is, by definition, $\bm^*:=W_\mu^{ZCA}\bm$, so that
    \[
        \sum_{i=1}^n |m_i^*|=\sum_{i=1}^n|(W^{ZCA}_\mu\bm)_i|.
    \]
    By definition of $G_1$, we have that
    \[
        G_1(X_i^*)=\frac{1}{2|m^*_i|}\int_{\erre^n\times \erre^n}|x_i^*-y_i^*|(N_i)_\#\mu(d\bx_*)(N_i)_\#\mu(d\by_*)
    \]
    where $\mu$ is the probability measure associated with $\bX$ and $N:\erre^n\to\erre^n$ defined as $N_i:\bx \to (W_\mu^{ZCA}\bx)_i$.
    By a change of variables, we have that
    \[
        G_1(X_i^*)=\frac{1}{2|m_i^*|}\int_{\erre^n\times \erre^n}|(W_\mu^{ZCA}(\bx-\by))_i|\mu(d\bx)\mu(d\by).
    \]
    By plugging the value of $G_1(X_i^*)$ in the right hand-side of \eqref{eq:ginidec}, we retrieve identity \eqref{eq:ginidec}.
    To conclude, notice that $G(X^*_i)\in[0,1]$ since $W_\mu^{ZCA}\bX$ is a non-negative random vector hence $G_1(\bX)\in[0,1]$ since $G_1(\bX)$ is a convex combination of values in $[0,1]$.
\end{proof}

It is insightful to interpret the result presented in Theorem \ref{thm:decomposition} in the context of what a whitening procedure does in to the multivariate data. 
When analysing a multidimensional statistical distribution, derived from a set of data, the measured quantities are typically interdependent. 
Consequently, the inequality expressed by the distribution, which measures how far apart are the individual multidimensional observations from each other,  cannot be assessed simply as a function of the one-dimensional Gini indexes, each applied to a different one-dimensional component of the distribution. 
However, by whitening the data, we can express the same inequality as a function of the standard one-dimensional Gini indexes, applied to the whitened one-dimensional components.

Indeed, Theorem \ref{thm:decomposition} indicates  a natural method to combine  the one-dimensional Gini indices by means of  a convex combination. The weights of the combination depend on the relative importance of the mean of the 
whitened components.
Specifically, the weight assigned to the Gini index of the $i$-th 
component of $\bX^*$ is proportional to its mean value, normalised by the sum of all mean values.
We formalize the properties of the $G_1$ inequality measure in the following Corollary.

\begin{corollary}
\label{crr:properties}
    Let $\bX$ be a random vector of mean ${\bm} = (m_1,m_2,\dots, m_n)^T$ and positive definite $n\times n$ covariance matrix $\Sigma_\mu$.
    Then following properties hold
    \begin{enumerate}
   \item \label{crr:pt1} For any $\epsilon>0$, there exists a random variable $\bX_\epsilon$ such that $G_1(\bX_\epsilon)\le \epsilon$.
        \item \label{crr:pt2} For any $\epsilon>0$, there exists a random variable $\bX_\epsilon$ such that $G_1(\bX_\epsilon)\ge 1 - \epsilon$.
        \item \label{crr:pt2.5} The $G_1$ is \textit{Scale Invariant}, that is $G_1(\mathcal{X})=G_1(\mathcal{X}_Q)$ for any diagonal matrix $Q=diag(q_1,q_2,\dots,q_n)$ with $q_j > 0$, and satisfies the \textit{Rising Tide} property, that is
        \[
            G_1(\bX+\bccc)\le G_1(\bX)
        \]
        for any positive vector $\bccc\in\erre^n_+$.

    \item \label{crr:pt4} Let us assume that $|m_1^*|$ is much larger than $\sum_{i=2}^n|m_i^*|$. 
    Then, 
    \[
        \lim_{|(m_1)_*|\to\infty}G_1(\bX) = G(X_1^*)
    \]
    that is $G_1(\bX)\sim G(X_1^*)$.
    This property tells us that if one of the uncorrelated component of $\bX^*$ dominates the others meanwise, then the overall inequality is mostly determined by its inequality.  
    \end{enumerate}
\end{corollary}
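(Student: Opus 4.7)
The plan is to lean on the decomposition formula \eqref{eq:ginidec} from Theorem \ref{thm:decomposition} for items \ref{crr:pt1}, \ref{crr:pt2} and \ref{crr:pt4}, and to combine the scale stability of Theorem \ref{thm:ZCAcorrSS} with the positivity preservation of Theorem \ref{thm:positivitypreserving} for item \ref{crr:pt2.5}. The decomposition already reduces $G_1$ to a convex combination of one-dimensional Ginis of the whitened components, so the extremal and asymptotic statements become questions about suitably chosen scalar distributions; the only structural point that is not already packaged in an earlier theorem is the Rising Tide bound.

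For item \ref{crr:pt1}, I would fix any $\bY$ with positive-definite covariance and consider the family $\bX_c = c\mathbf{1}+\bY$ for $c>0$. Because translation does not affect the covariance or the correlation matrix, $W^{ZCA}_\mu$ is independent of $c$, while the whitened mean $\bm^*$ grows linearly in $c$. Each component $(\bX_c^*)_i$ therefore has unit variance and mean of order $c$, so from the elementary bound $\mathbb{E}|Z-Z'|\le\sqrt{2}\,\sigma(Z)$ one gets $G((\bX_c^*)_i)=O(1/c)$, and \eqref{eq:ginidec} yields $G_1(\bX_c)=O(1/c)$. For item \ref{crr:pt2}, I would pick iid components each equal to $M>0$ with probability $p$ and to $0$ with probability $1-p$; the covariance matrix is then a positive multiple of the identity, so $W^{ZCA}_\mu$ is a scalar multiple of the identity, the scalar Ginis are preserved under whitening, and by symmetry all weights in \eqref{eq:ginidec} equal $1/n$. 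A direct computation gives that each scalar Gini equals $1-p$, so $G_1(\bX)=1-p\ge 1-\epsilon$ when $p\le\epsilon$.

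For item \ref{crr:pt2.5}, scale invariance is essentially free: by Theorem \ref{thm:ZCAcorrSS} the identity $W^{ZCA}_{Q\mu}(Q\bx)=W^{ZCA}_\mu\bx$ holds, so both the integrand and $M_1(\bX)$ in \eqref{def:Ginipdef} are unchanged under $\bX\mapsto Q\bX$. The Rising Tide bound is the step I expect to be the main obstacle, because translating $\bX$ by $\bccc\in\erre^n_+$ does change the normalizer, and an $\ell_1$-norm inequality of the form $\|a+b\|_1\ge\|a\|_1$ is false for general vectors $a,b$. Here I would invoke Theorem \ref{thm:positivitypreserving}: the specific square root used in \eqref{eq:Wcorr} maps $\erre^n_+$ into itself, so both $W^{ZCA}_\mu\bm$ and $W^{ZCA}_\mu\bccc$ are coordinatewise non-negative, the $\ell_1$ norm is additive on them, and
\[
\|W^{ZCA}_\mu(\bm+\bccc)\|_1=\|W^{ZCA}_\mu\bm\|_1+\|W^{ZCA}_\mu\bccc\|_1\ge\|W^{ZCA}_\mu\bm\|_1.
\]
Since the covariance, and therefore $W^{ZCA}_\mu$ and the integrand $\|W^{ZCA}_\mu(\bx-\by)\|_1$, are translation invariant, only the normalizer moves, and it can only grow; this forces $G_1(\bX+\bccc)\le G_1(\bX)$.

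Finally, item \ref{crr:pt4} is a one-line consequence of \eqref{eq:ginidec}: as $|m_1^*|\to\infty$ with the other $|m_i^*|$ bounded, the weight attached to $G(X_1^*)$ tends to $1$ while the remaining weights tend to $0$, and each one-dimensional Gini lies in $[0,1]$, so $G_1(\bX)\to G(X_1^*)$ by dominated convergence applied to the finite sum.
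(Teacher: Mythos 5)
Your proposal is correct and follows essentially the same route as the paper: the decomposition of Theorem \ref{thm:decomposition} for points \ref{crr:pt1}, \ref{crr:pt2} and \ref{crr:pt4}, and the positivity-preservation of $W^{ZCA}_\mu$ (Theorem \ref{thm:positivitypreserving}) to show that translation by $\bccc$ can only increase the normalizer in the Rising Tide bound. The only differences are cosmetic: for point \ref{crr:pt1} the paper translates a specific i.i.d.\ two-point example (rather than an arbitrary $\bY$, via the bound $\mathbb{E}|Z-Z'|\le\sqrt{2}\,\sigma$), and for point \ref{crr:pt2} it uses a variance-normalized two-point law, but the mechanism is identical.
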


\begin{proof}
We divide the proof into four parts.
\textbf{Proof of Point (\ref{crr:pt1}).}
Let $\epsilon>0$ be fixed.
Consider $\bX=(X_1,X_2,\dots,X_n)$ be a random vector whose components are independent and identically distributed. Moreover, assume that each $X_i$ is distributed as follows
\[
    X_i=\begin{cases}
        0\quad\quad\quad &\textrm{with probability} \quad p=\frac{1}{2}\\
        2\quad\quad\quad &\textrm{with probability} \quad p=\frac{1}{2}.
    \end{cases}
\]
It is easy to check that $\mathbb{E}[X_i]=1$, $Var(X_i)=1$ for every $i=1,\dots,n$, and, by construction, $Cor(X_i,X_j)=0$ if $i\neq j$, therefore $\bX^*=W^{ZCA}_\mu\bX=\bX$.
Since each component $\bX^*$ are identically distributed, formula \eqref{eq:ginidec} boils down to
\[
    G_1(\bX)=G(X_1)=\frac{2}{2\mathbb{E}[X_i]}=1.
\]
Given $M>0$ let us define $\bX_M=(X_1+M,X_2+M,\dots,X_n+M)$.
By the same argument used above, we have that $\bX_M^*=\bX_M$ and that
\[
    G_1(\bX_M)=G(X_1+M)=\frac{1}{(1+M)},
\]
It is then easy to see that if $M>\frac{1}{2\epsilon}$, then $G_1(\bX_M)\le \epsilon$.
\textbf{Proof of Point (\ref{crr:pt2}).}
Let $\epsilon>0$ be fixed.
Consider $\bX=(X_1,X_2,\dots,X_n)$ be a random vector whose components are independent and identically distributed. Moreover, assume that each $X_i$ is distributed as follows
\[
    X_i=\begin{cases}
        0\quad\quad\quad &\textrm{with probability} \quad 1-p\\
        \frac{1}{\sqrt{p}(1-p)}\quad\quad\quad &\textrm{with probability} \quad p,
    \end{cases}
\]
where $p\in(0,1)$.
It is easy to see that, for every $p\in(0,1)$, the covariance matrix of $\bX$ is the identity matrix, thus $\bX^*=W^{ZCA}_\mu\bX=\bX$.
Moreover, we have that $\mathbb{E}[X_i]=\frac{\sqrt{p}}{1-p}$ and thus
\[
    G_1(\bX)=G(X_1)=\frac{1-p}{2\sqrt{p}}\frac{2p(1-p)}{\sqrt{p}(1-p)}=1-p.
\]
In particular, if $p\le \epsilon$, we have $G_1(\bX)\ge 1-\epsilon$.

\textbf{Proof of Point (\ref{crr:pt2.5}).}
The scale invariance follows directly from Theorem \ref{thm:ZCAcorrSS}.
Let us now consider the rising tide property.
Let $\bccc$ be a vector whose components are positive, that is $\bccc=(c_1,c_2,\dots,c_n)$, with $c_i\ge 0$.
Since $\bX$ and $\bX+\bccc$ have the same covariance matrix, it follows that $W^{ZCA}_\mu=W^{ZCA}_{\bX+\bccc}$ and that $\mathbb{E}[\bX+\bccc]=\mathbb{E}[\bX]+\bccc$.
In particular, we have that
\[
W^{ZCA}_{\bX+\bccc}(\mathbb{E}[\bX+\bccc])=W^{ZCA}_\mu(\mathbb{E}[\bX]+\bccc)=\bm^*+W^{ZCA}_\mu\bccc
\]
and thus
\begin{align*}
    G_1(&\bX+\bccc)\\
    &=\frac{1}{2\sum_{i=1}^n|(\bm^*+W^{ZCA}_\mu\bccc)_i|}\int_{\erre^n\times \erre^n}\sum_{i=1}^n|(W^{ZCA}_\mu(\bx+\bccc-(\by+\bccc)))_i|\mu(d\bx)\mu(d\by)\\
    &=\frac{1}{2\sum_{i=1}^n|(\bm^*+W^{ZCA}_\mu\bccc)_i|}\int_{\erre^n\times \erre^n}\sum_{i=1}^n|(W^{ZCA}_\mu(\bx-\by))_i|\mu(d\bx)\mu(d\by)\\
    &\le\frac{1}{2\sum_{i=1}^n|\bm^*_i|}\int_{\erre^n\times \erre^n}\sum_{i=1}^n|(W^{ZCA}_\mu(\bx-\by))_i|\mu(d\bx)\mu(d\by)= G_1(\bX),
\end{align*}
where the last inequality follows from the fact that $W^{ZCA}_\mu$ maps $\{\bx\in\erre^n\;\text{s.t.}\;x_i\ge 0\}$ into itself, thus $|(\bm^*+W^{ZCA}_\mu\bccc)_i|=|\bm^*_i|+|(W^{ZCA}_\mu\bccc)_i|\ge |\bm^*_i|$ for every $i=1,\dots,n$.

\textbf{Proof of Point (\ref{crr:pt4}).}
It follows from the following identity 
\[
\lim_{|(m_1)_*|\to\infty}\frac{|m_i^*|}{\sum_{j=1}^n |m_j^*|}=\begin{cases}
    1\quad\quad\text{if}\quad i = 1\\
    0\quad\quad\text{otherwise}
\end{cases}.
\]
\end{proof}

Finally, since any affine transformation of a Gaussian random vector is a Gaussian random vector, we can explicitly express $G_1$ as a function of the parameters of the Gaussian distribution. 

\begin{theorem}
\label{thm:gaussiangini}
   If $\bX$ is a Gaussian random vector whose mean is non-null, that is $\bm\neq(0,0)$, then also $\bX^*$ is a Gaussian random vector with non-null mean.
   Moreover, we have that
    \[
        G_1(\bX)=\frac{n}{\sqrt{\pi}\sum_{i=1}^n|(W^{ZCA}_\mu\bm)_i|}.
    \]
  
\end{theorem}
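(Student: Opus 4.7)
The plan is to reduce the multivariate statement to the univariate Gaussian Gini computation via the decomposition formula proved in Theorem \ref{thm:decomposition}. First, since $\bX$ is Gaussian and $W^{ZCA}_\mu$ is a deterministic linear map, the whitened vector $\bX^* = W^{ZCA}_\mu \bX$ is itself Gaussian. Its covariance matrix is the identity by the defining property of a whitening matrix, which in the Gaussian setting upgrades uncorrelatedness to independence; hence the components $X_i^*$ are independent and each $X_i^* \sim \mathcal{N}(m_i^*, 1)$, where $m_i^* := (W^{ZCA}_\mu \bm)_i$. Non-nullity of $\bm^*$ follows from invertibility of $W^{ZCA}_\mu$, which guarantees that $\bm^*=W^{ZCA}_\mu\bm\neq 0$ whenever $\bm\neq 0$, so the denominator $\sum_i |m_i^*|$ is strictly positive.

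Next, I would invoke Theorem \ref{thm:decomposition}, which gives
\[
G_1(\bX) = \sum_{i=1}^n \frac{|m_i^*|}{\sum_{j=1}^n |m_j^*|}\, G(X_i^*),
\]
turning the problem into a one-dimensional calculation: the Gini index of a $\mathcal{N}(m_i^*, 1)$ variable. For this, I would use the representation $G(X) = \mathbb{E}[|X-Y|]/(2|\mathbb{E}[X]|)$ with $X,Y$ iid. If $X,Y \sim \mathcal{N}(m_i^*, 1)$, then $X-Y \sim \mathcal{N}(0, 2)$, so $|X-Y|$ is half-normal with mean $\sqrt{2}\cdot\sqrt{2/\pi} = 2/\sqrt{\pi}$. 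Hence
\[
G(X_i^*) = \frac{2/\sqrt{\pi}}{2|m_i^*|} = \frac{1}{\sqrt{\pi}\,|m_i^*|}.
\]

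Substituting into the decomposition yields the telescoping simplification
\[
G_1(\bX) = \sum_{i=1}^n \frac{|m_i^*|}{\sum_{j=1}^n |m_j^*|} \cdot \frac{1}{\sqrt{\pi}\,|m_i^*|} = \frac{n}{\sqrt{\pi}\sum_{j=1}^n |m_j^*|},
\]
which is the desired identity. I do not anticipate any serious obstacle: the only subtlety is the edge case in which some individual $m_i^*$ vanishes, where $G(X_i^*)$ is not defined, but the product $|m_i^*|\, G(X_i^*) = \mathbb{E}[|X_i^*-Y_i^*|]/2 = 1/\sqrt{\pi}$ still makes sense. To avoid this issue cleanly, I would carry out the computation directly at the level of $G_1$, writing
\[
G_1(\bX) = \frac{1}{2\sum_j |m_j^*|}\sum_{i=1}^n \mathbb{E}\bigl[|X_i^*-Y_i^*|\bigr] = \frac{1}{2\sum_j |m_j^*|}\cdot n\cdot \frac{2}{\sqrt{\pi}},
\]
which bypasses the indeterminate form and gives the conclusion in a single line.
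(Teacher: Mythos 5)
Your proof is correct and follows essentially the same route as the paper, which simply asserts that the result follows from $\bX^*$ being Gaussian with identity covariance; you supply the details (the decomposition of Theorem \ref{thm:decomposition} plus the half-normal computation $\mathbb{E}[|X_i^*-Y_i^*|]=2/\sqrt{\pi}$) that the paper leaves implicit. Your handling of the edge case $m_i^*=0$ by working directly with the definition of $G_1$ is more careful than the published argument.
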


\begin{proof}
    It follows from the fact that any whitened multivariate Gaussian distribution is a Gaussian distribution with independent components and whose covariance matrix is the identity.
\end{proof}

This shows that the $G_1$ index of a Gaussian distribution is proportional to the inverse of the $\|\bm^*\|_1$.
However, the result in Corollary
\ref{crr:properties} does not hold, as Gaussian vectors take value on $\erre^n$. In this case, Theorem \ref{thm:gaussiangini} is to be interpreted as giving the explicit expression of a multidimensional coefficient of variation, which exhibits the same properties of that of Voinov and Nikulin \cite{Aer,VN}, which is obtained by evaluating the multivariate Gini index of a Gaussian random vector with respect to the 2-Mahalanobis Metric.

Finally, notice that Theorem \ref{thm:gaussiangini} can be generalized to any case in which the multivariate probability distribution allows an explicit computation of the one dimensional Gini index for its components.

\section{Application}\label{sec:Examples}
In this Section we show the practical importance of our proposed multivariate index of inequality, by means of the example introduced in \cite{giudici2024measuring}, which concerns the study of market economic inequality.\\
 A country market is unequal, from an economic viewpoint, when it is concentrated, that is,  when it presents a high inequality: few companies have a large size  and many have a  small size. 
\\ To measure market inequality, we need to specify how we can measure the size of a company, using publicly available data. For publicly listed companies we can consider, for example, the daily market capitalisation, the current number of employees and the yearly revenues. This information is publicly downloadable from the website companiesmarketcap.com, which contains, at the moment, the $8,081$ largest companies  in the world (by capitalisation). 
\\Table \ref{sumstat} reports the summary statistics for all companies, in terms of Market Capitalisation, Number of Employees and Revenues.
 
\begin{table}[htbp!]
  \centering
  \scriptsize
    \begin{tabular}{lcc}
    \hline 
 & \textbf{\textit{Mean}} & \textbf{\textit{Standard deviation}}\\
\hline
\textbf{\textit{MarketCap}} & $1.22\cdot10^{10}$ & $7.15 \cdot10^{10}$  \\
\textbf{\textit{Revenues}} & $6.86\cdot10^9$ & $2.49\cdot10^{10}$  \\
\textbf{\textit{Employees}} & $1,50 \cdot10^4$ & $5,26 \cdot10^4$ \\

\hline
    \end{tabular}
      \caption{Summary statistics}\label{sumstat}
\end{table}%

  Table \ref{sumstat} shows that, as expected, both the mean and standard deviation of Market Capitalisation and Revenues are much larger than those of the Number of Employees. In addition, the variability from the mean of the Market Capitalisation is about three times higher than that of the Revenues.
\\The three variables are not much correlated with each other, as their correlation matrix in 
Table \ref{corr} shows.

  \begin{table}[htbp!]
  \centering
  \scriptsize
    \begin{tabular}{llll}
    \hline 
 & \textbf{\textit{MarketCap}} & \textbf{\textit{Employees}} &\textbf{\textit{Revenues}}\\
\hline
\textbf{\textit{MarketCap}} & 1.000 & 0.010 & 0.102\\
\textbf{\textit{Employees}} & 0.010 & 1.000 & 0.036 \\
\textbf{\textit{Revenues}} & 0.102& 0.036 & 1.000 \\
\hline
    \end{tabular}
      \caption{Correlation matrix}\label{corr}
\end{table}%

 It is usually of interest to compare market inequality in different countries. 
 This can be done comparing the value of the Gini one dimensional indices.  For the sake of illustration, and without loss of generality, here we will measure market inequality at the overall level as well as for nine of the largest economies:  Canada, China, France, Germany, Italy, France, India, Japan, the United Kingdom and the United States. 
\\Table \ref{results} shows the calculation of the one dimensional Gini indexes, using either Market Capitalisation, Number of Employees and Revenues as the metrics with which to measure the size of the company.

 \begin{table}[htbp!]
  \centering
  \scriptsize
    \begin{tabular}{lccccc}
    \hline 
Countries &
       Number of companies & Gini MarketCap & 
Gini  Employees & Gini Revenues & $G_1$  \\
    \hline
    United States & 3652 & 0.886 & 0.845& 0.851 & 0.856 \\ 
    Canada & 395 & 0.794& 0.840 & 0.879 & 0.829 \\ 
        France & 119 & 0.767& 0.794 & 0.805& 0.789 \\
    Germany & 220 & 0.777 & 0.838 & 0.777 & 0.793 \\
        Italy & 86 & 0.638 & 0.783 & 0.829 & 0.737 \\
    United Kingdom & 258 & 0.754 & 0.813 & 0.828 & 0.794 \\
    China & 314 & 0.761 & 0.782 & 0.785& 0.776 \\
    India & 564 & 0.747 & 0.816 & 0.860 & 0.801 \\
    Japan & 350 & 0.667 & 0.771 & 0.714 & 0.715 \\
    \hline
    All & 8081  & 0.830 & 0.833 & 0.835 & 0.832\\
    \hline
    \end{tabular}
      \caption{Unidimensional Gini coefficients (referred to: MarketCap, Employee and Revenue) and multidimensional $G_1$ coefficient.}\label{results}
\end{table}%
From Table  \ref{results}, when all countries are considered, the three Gini one dimensional indices are very similar to each other.
\\Differently, when individual countries, are considered, there are remarkable differences. For example, if we consider market capitalisation, the United States is the most concentrated country, followed by Canada, Germany and France. Whereas if we measure size in terms of number of employees, the United States is followed by Canada, Germany and India. And, in terms of revenues, the most concentrated country appears Canada, followed by India, the United States and Italy.
\\It follows that we do not have a unique ranking of the countries, in terms of market inequality: it depends on how we define the size of a company: using market capitalisation, number of employees, or revenues. 
\\The intuition suggests that we should take all three scales into account, to attain a reliable ranking of the countries, in terms of market inequality. A multidimensional measure of inequality  is necessary.
\\The multidimensional $G_1$ index fills this gap. Table \ref{results} reports, in the foremost right column, the values of the $G_1$ index, obtained applying equation \eqref{eq:ginidec} to the whitening process defined as in \eqref{eq:Wcorr}.
\\The values of $G_1$ show that, considering all world countries, the multidimensional Gini index is equal to 0.82, in line with the individual values. 
\\More importantly, the multidimensional index gives a ranking of country inequality that take all three size measurements into account. The most unequal country (most concentrated market) is the United States, followed by Canada, in line with the results of the individual Gini indices for Market capitalisation and Employees. The third most concentrated market is India, owing to its high concentration of revenues. The United Kingdom, Germany and France follow, close to each other. The least unequal countries are Italy and Japan, characterised by many small and medium enterprises.
\\For completeness, we remark that the weights attributed to the individual indices, in Equation \eqref{eq:ginidec} are equal to $(0.335, 0.301, 0.363)$, respectively for Market capitalisation, Employees and Revenues. This means that the whitening process gives a slightly higher weight to the inequality in Revenues, followed by that in Market Capitalisation and, last, by that in Number of Employees.

\begin{paragraph}{Fundings.}
- This work has been carried out under the activities of the National Group of Mathematical Physics (GNFM). GT wish to acknowledge partial support by IMATI, Institute for Applied Mathematics and Information Technologies “Enrico Magenes”,  Pavia, Italy.
The work has also been partially supported by
 the European Union - NextGenerationEU, in the framework of the GRINS- Growing Resilient, INclusive and Sustainable (GRINS PE00000018).
The views and opinions expressed are solely those of the authors and do not necessarily reflect those of the European Union, nor can the European Union be held responsible for them.

\end{paragraph}

\bibliographystyle{alpha}
\bibliography{sample}

\end{document}